\def\thm@space@setup{%
  \thm@preskip=\parskip \thm@postskip=0pt
}
\declaretheoremstyle[%
  spaceabove=6pt,%
  spacebelow=6pt,%
  headfont=\normalfont\itshape,%
  postheadspace=1em,%
  qed=\qedsymbol%
]{mystyle}
\def\qed{\hfill\ifhmode\unskip\nobreak\fi\quad\ifmmode\Box\else\hfill$\Box$\fi}
\def\ite#1{\hfill\break${}$\hbox to 50pt {\quad(#1)\hfill}}
\newtheorem{thm}{Theorem}[section]
\newtheorem{problem}[thm]{Problem}
\newtheorem{definition}{Definition}
\newtheorem{lem}[thm]{Lemma}
\newtheorem{conj}[thm]{Conjecture}
\def\ex{{\rm{ex}}}
\newcommand{\cir}{\circlearrowright}
\tikzstyle{vertex}=[circle,fill=black,inner sep=2pt]
\tikzstyle{vertrect}=[draw,rectangle,inner sep=2pt]
\tikzstyle{vertdia}=[draw,diamond,inner sep=2pt]
\begin{document}

\title{\vspace{-0.7in} Ordered and convex geometric trees with linear extremal function}

\author{ 
 Zolt\'an F\" uredi\thanks{Research supported by grant KH 130371 
from the National Research, Development and Innovation Office NKFIH and
by the Simons Foundation Collaboration grant \#317487.}
\and
Alexandr Kostochka\thanks{Research  supported in part by NSF grant
 DMS-1600592 and by grants 18-01-00353A  and 16-01-00499 of the Russian Foundation for Basic Research.
} 
\and
Dhruv Mubayi\thanks{Research partially supported by NSF awards DMS-1300138 and DMS-1763317.} \and Jacques Verstra\"ete\thanks{Research supported by NSF award DMS-1556524.}
}

\pagestyle{myheadings} \markright{{\small{\sc Z.~F\"uredi, A.~Kostochka, D.~Mubayi,  J. Verstra\"ete:   
 Ordered and 
   geometric trees}}}
\date{December 12, 2018}

\maketitle

\vspace{-0.4in}

\begin{abstract}
The extremal functions $\ex_{\rightarrow}(n,F)$ and $\ex_{\cir}(n,F)$
for ordered and convex geometric acyclic graphs $F$ have been extensively investigated by a number of researchers. Basic questions are to determine when $\ex_{\rightarrow}(n,F)$ and $\ex_{\cir}(n,F)$ are linear in $n$, the latter posed by Bra\ss-K\'arolyi-Valtr in 2003. In this paper, we answer both these questions for every tree  $F$.

We give a forbidden subgraph characterization for a family $\cal T$ of ordered trees with $k$ edges, and show that  $\ex_{\rightarrow}(n,T) = (k - 1)n - {k \choose 2}$ for all $n \geq k + 1$ when $T \in {\cal T}$ and $\ex_{\rightarrow}(n,T) = \Omega(n\log n)$ for $T \not\in {\cal T}$. We also describe the family ${\cal T}'$ of the convex geometric trees with linear Tur\' an number and show that for every
convex geometric tree $F\notin {\cal T}'$, $\ex_{\cir}(n,F)= \Omega(n\log \log n)$.
\end{abstract}

\begin{flushright}
 Dedicated to the memory of B. Gr\" unbaum
\end{flushright}

\section{Introduction}

An {\em ordered graph} refers to a graph whose vertex set is linearly ordered and a {\em convex geometric} or {\em cg} graph refers to a graph whose vertex set is cyclically ordered.  Throughout this paper, an $n$-vertex ordered or cg graph will be assumed to have vertex set $[n] := \{1,2,\dots,n\}$ with the natural ordering $<$; in the cg setting we use $v<w<x$ to denote that $w$ lies between $v$ and $x$ in the clockwise orientation. For $n$ a positive integer and $F$ an ordered (respectively, cg) graph, let the extremal function $\ex_{\rightarrow}(n,F)$ (respectively, $\ex_{\cir}(n,F)$)
 denote the maximum number of edges in an $n$-vertex ordered (respectively, cg) graph that does not contain $F$. Both $\ex_{\rightarrow}(n,F)$ and
 $\ex_{\cir}(n,F)$
 have been extensively studied in the literature,
in particular in the case where $F$ is a forest. To describe the known results, we require some terminology.

Given subsets $A, B$ of a linearly ordered set, write $A<B$ to denote that $a<b$ for every $a \in A$ and  $b \in B$.
The {\em interval chromatic number} $\chi_i(F)$ of an ordered graph $F$ is the minimum $k$
	such that the vertex set of $F$ can be partitioned into sets $A_1<A_2<\cdots <A_k$ such that no edge has both endpoints in any $A_i$. We call these sets {\em intervals} or {\em segments}.
It is straightforward to see that if $\chi_i(F)>2$, then  $\ex_{\rightarrow}(n,F)=\Theta(n^2)$, since an ordered  complete balanced bipartite graph with interval chromatic number two does not contain $F$.

\subsection{Ordered graphs}
Standard results in extremal graph theory imply that if $\ex(n,F) = n^{1 + o(1)}$, then $F$ is acyclic. This motivates the following central conjecture in the area, due to Pach and Tardos~\cite{PT}:

\begin{conj}\label{main}
For every forest $F$ with $\chi_i(F)=2$ we have $\ex_{\rightarrow}(n,F) = n(\log n)^{O(1)}$ as $n \rightarrow \infty$.
\end{conj}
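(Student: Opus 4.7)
This is the open Pach--Tardos conjecture, so any serious plan is speculative; the natural attempt is induction on $k = |E(F)|$, maintaining a quantitative bound $\ex_\rightarrow(n,F) \le n(\log n)^{c(F)}$ with $c(F)$ an explicit function of the tree. Since $\chi_i(F) = 2$, fix a 2-interval coloring $V(F) = A \cup B$ with $A < B$. The base case is trivial. For the inductive step, pick a leaf $v$, say $v \in B$, with unique neighbor $u \in A$, set $F' = F - v$, and suppose we have already established $\ex_\rightarrow(n,F') \le n(\log n)^{c(F')}$.

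Given an ordered graph $G$ on $[n]$ with $e(G) \ge n(\log n)^{c(F') + \alpha}$ for an appropriate $\alpha = \alpha(F)$, the goal is to locate a copy of $F$. After a dyadic degree-cleanup, I would pass to a subgraph in which every vertex has right-degree at least $(\log n)^{\alpha - O(1)}$ in the bipartite graph between two halves of $[n]$. The technical heart would then be an \emph{ordered} version of dependent random choice: sample a random tuple $(x_1,\ldots,x_t)$ of right-endpoints in the proper order, intersect their left-neighborhoods while respecting the ordered pattern required by $v$'s position in $F$, and show via a supersaturation count that some resulting copy of $F'$ in the common left-neighborhood can be extended by an unused right-neighbor of the image of $u$, yielding $F$. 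The multiplicities should give, by averaging, a single ordered configuration hosting a full embedding.

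\textbf{The main obstacle.} Ordered dependent random choice is genuinely weaker than its unordered counterpart: one cannot freely average over all subsets of a common neighborhood, only over ordered subsets, and the combinatorics of ordered intersection patterns is governed by Davenport--Schinzel or forbidden matrix-pattern bounds rather than polylogarithmic ones. Consequently the multiplicative loss per leaf-peel is not $\log n$ but can be as bad as $2^{\Omega(\sqrt{\log n})}$, which compounds to violate the polylog goal after a bounded number of inductive steps. This is precisely why progress in the literature (Pach--Tardos, Tardos, Kupavskii, and others) achieves polylog bounds only for highly structured forests admitting canonical ``protected'' embeddings. A proof of the full conjecture would almost certainly require a novel potential function on ordered trees, tracking an ordered-structural complexity finer than edge count, together with a clever rule for which leaf to peel so that this potential strictly decreases at each step with only polylogarithmic loss. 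Designing such a potential, and proving that the corresponding embedding recursion closes with polylog total loss, is in my view the crux of the problem, and explains why the conjecture has resisted attack for over fifteen years.
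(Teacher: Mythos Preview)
The statement in question is Conjecture~\ref{main}, which the paper explicitly records as \emph{open}: the authors state that it ``remains open in general'' and only cite partial progress (F\"uredi--Hajnal for $\le 4$ edges, Kor\'andi--Tardos--Tomon--Weidert for certain classes). There is therefore no proof in the paper to compare your proposal against.

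You correctly recognize this at the outset, and what you have written is not a proof but an honest assessment of why the natural leaf-peeling induction stalls. That assessment is essentially accurate: the per-step loss in an ordered embedding argument is not controlled by anything as tame as $\log n$, and the Davenport--Schinzel phenomenon you allude to is exactly the obstruction the paper itself highlights (the ordered forest $\{13,35,24,46\}$ with extremal function $\Theta(n\alpha(n))$). Your proposed remedy---an ordered dependent random choice combined with a bespoke potential function on ordered trees---is plausible as a research direction but is, as you concede, entirely speculative; nothing in the sketch pins down why the loss would be polylogarithmic rather than $2^{\Theta(\sqrt{\log n})}$ or worse.

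One minor point worth noting: the leaf-peeling scheme you describe \emph{does} work, with only constant loss per step, for the restricted class of trees the paper actually handles (the $z$-trees of Section~\ref{ztrees}); see the proofs of Lemmas~\ref{inc} and~\ref{zlem}. The reason it succeeds there is that the tree structure forces the peeled leaf to be ``extremal'' in the ordering, so the deleted edge can always be chosen as a longest edge from a prescribed vertex, with no combinatorial explosion. For a general forest with $\chi_i = 2$ no such canonical peeling order exists, which is precisely the gap between the paper's results and the full conjecture.
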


Conjecture~\ref{main} remains open in general, though it was  verified for all $F$ with at most four edges by F\"{u}redi and Hajnal~\cite{FH}. Based on their work, Tardos~\cite{T} determined the order of magnitude of $\ex_{\rightarrow}(n,F)$ for every ordered graph $F$ with at most four edges (see Corollary 4.3 in~\cite{T}); in particular this verifies Conjecture \ref{main} for acyclic graphs with at most four edges and interval chromatic number two.
 Klazar~\cite{K} showed that the partial case of the F\"{u}redi-Hajnal conjecture that $\ex_{\rightarrow}(n,F) = O(n)$ for every matching $F$, implies
the Stanley-Wilf conjecture, which was proved by Marcus and Tardos~\cite{MT}. We also point out that extremal problems for ordered forests have applications to 
theoretical computer science, to search trees and path-compression based data structures (see Bienstock and Gy\"{o}ri~\cite{BG}, and Pettie~\cite{Pettie} for a survey).

\medskip

A particularly interesting phenomenon, discovered by F\"{u}redi and Hajnal~\cite{FH}, is that the order of magnitude of the extremal function for the
ordered forest $\{13,35,24,46\}$ consisting of two interlacing paths of length two is determined by the extremal theory for Davenport-Schinzel sequences, and in particular the extremal function has order of magnitude $\Theta(n\alpha(n))$, where $\alpha(n)$ is the inverse Ackermann function.
Further progress towards the conjecture was made by Kor\'{a}ndi, Tardos, Tomon and Weidert~\cite{KTTW}, in the equivalent reformulation of the problem in
terms of forbidden 0-1 submatrices of 0-1 matrices, giving a wide class of graphs $F$ for which $\ex_{\rightarrow}(n,F) = n^{1 + o(1)}$ as $n \rightarrow \infty$. The following basic question closely related to Conjecture~\ref{main} also has  information theoretic applications (see for instance~\cite{BG}).

\begin{problem}\label{prob1}
Determine which ordered forests have linear extremal functions.
\end{problem}
The problem is not even solved for some forests with five edges. And the above example by F\"{u}redi and Hajnal of a $4$-edge forest
with extremal function involving  the inverse Ackermann function indicates that the problem is likely to be hard.
However, it turns out that for trees the situation is simpler.

 In this paper, we resolve Problem~\ref{prob1} for ordered trees, and also determine the exact extremal function for ordered trees when the extremal function is linear. This exact result is perhaps surprising, since the situation in the unordered case is  complicated, as represented by the Erd\H os-S\'os conjecture. But the ordered situation has the benefit that most trees cannot have linear extremal function. On the other hand, the $\log n$ jump in complexity for trees with nonlinear extremal function is perhaps also interesting.

  The description of the ordered trees with linear extremal functions is based on three forbidden subtrees which are the  ordered paths $P, Q$ and $R$ shown below.

 \begin{center}
 	\includegraphics[width=2.1in]{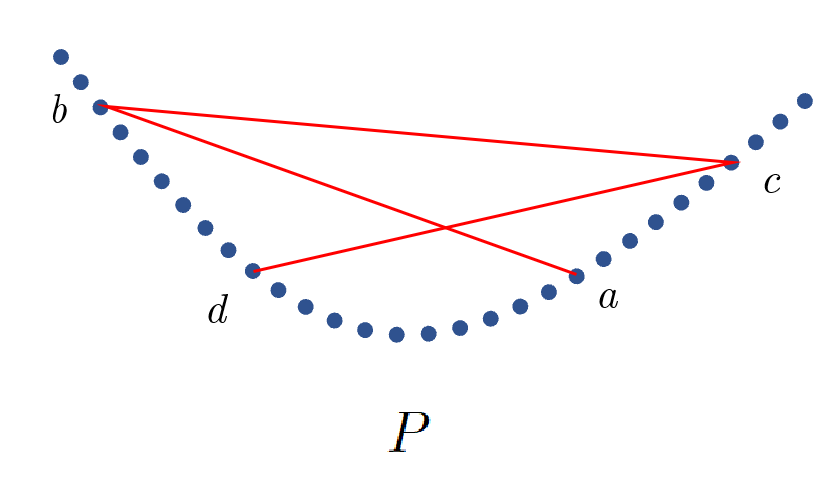}  \includegraphics[width=2.1in]{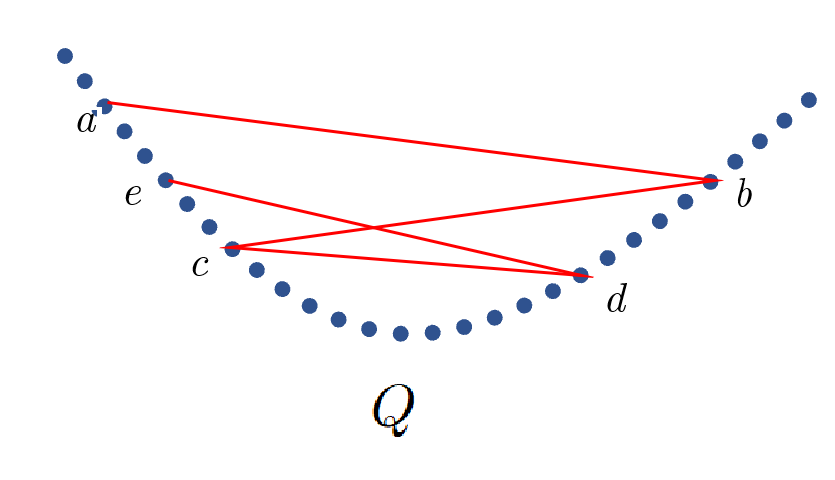} \includegraphics[width=2.1in]{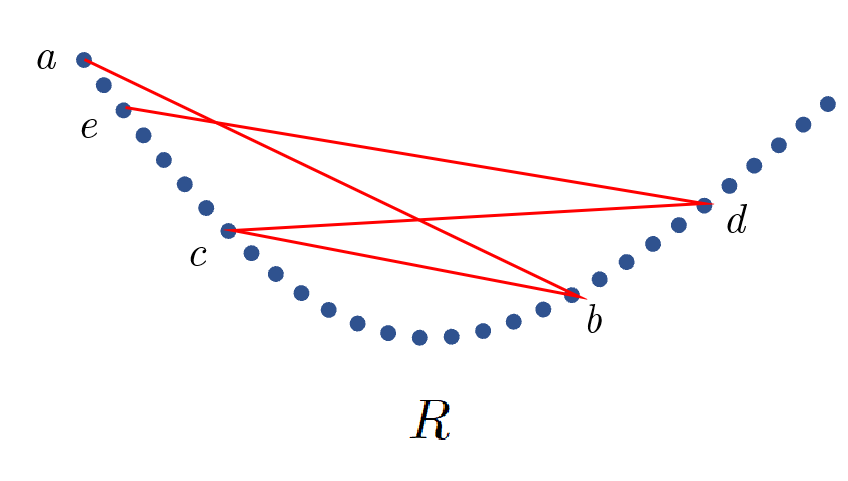}
 	
 	Figure 1 : Forbidden paths $P$, $Q$ and $R$.
 \end{center}

We are now ready to state our first main result.
\begin{thm}\label{trees}
Let $T$ be an ordered tree with $k$ edges and $\chi_i(T)=2$. If $T$ contains at least one of $P, Q, R$, then $\ex_{\rightarrow}(n,T) = \Omega(n\log n)$ as $n \rightarrow \infty$, otherwise
\[ \ex_{\rightarrow}(n,T) = (k - 1)n - {k \choose 2}\]
for all $n \ge k+1$.
\end{thm}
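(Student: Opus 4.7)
For the exact lower bound $(k-1)n-\binom{k}{2}$, I would consider the ordered graph $G_0$ on $[n]$ consisting of every edge $ij$ with $\min(i,j)\le k-1$: the clique on $\{1,\dots,k-1\}$ together with all edges from this clique to $\{k,\dots,n\}$. A direct count gives $\binom{k-1}{2}+(k-1)(n-k+1)=(k-1)n-\binom{k}{2}$ edges. In any order-preserving embedding of an ordered tree $T$ into $G_0$, the smaller endpoint of every edge of $T$ must land in $\{1,\dots,k-1\}$, so the embedding can exist only if the set of ``left endpoints'' of edges of $T$ has size at most $k-1$. A structural analysis of $\mathcal T$ via the avoidance of $P,Q,R$ identifies those trees for which no such embedding is possible; the symmetric right-core construction handles the mirror case. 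This is where the specific paths $P,Q,R$ enter the lower-bound argument.

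For the upper bound I would proceed by induction on $n$. The base $n=k+1$ follows because every ordered tree with $k$ edges embeds into the complete ordered graph $K_{k+1}$ via the unique order-preserving bijection, so a $T$-free ordered graph on $k+1$ vertices has at most $\binom{k+1}{2}-1 = (k-1)(k+1)-\binom{k}{2}$ edges. For the inductive step the key lemma is: if $G$ is $T$-free with $n\ge k+2$ vertices, then, possibly after reversing the order, the rightmost vertex of $G$ has at most $k-1$ neighbors to its left. Granting this, delete that vertex and apply induction to $G-\{n\}$, yielding $|E(G)|\le (k-1)(n-1)-\binom{k}{2}+(k-1)=(k-1)n-\binom{k}{2}$. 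The lemma itself is proved by contradiction: given $k$ left-neighbors $u_1<\dots<u_k$ of $n$, one embeds $T$ into $G$ with the rightmost vertex of $T$ mapped to $n$ and the remaining vertices placed greedily through a DFS-style traversal using $u_1,\dots,u_k$ as anchors, where the absence of $P$, $Q$, $R$ in $T$ is precisely what prevents the greedy process from getting stuck.

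For the superlinear lower bound when $T$ contains one of $P$, $Q$, or $R$, I would exhibit, separately for each of these three paths, an $n$-vertex ordered graph with $\Omega(n\log n)$ edges avoiding it. A natural construction is recursive doubling: split $[n]$ into two halves, recurse on each, and add a tailored bipartite graph across the split so that any copy of the forbidden path would be forced to span too many recursion levels. Summing the $\Theta(n)$ contribution at each of the $\log_2 n$ levels gives $\Omega(n\log n)$, and the bound transfers monotonically to $T$.

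The main obstacle is the key lemma in the upper bound. Because the induction strips the rightmost vertex of $G$ rather than an arbitrary low-degree vertex, the contrapositive must produce a copy of $T$ in which the rightmost vertex of $T$ actually lands at position $n$ in $G$ — a substantially stronger statement than merely finding some copy of $T$. I expect the proof to strengthen the inductive hypothesis to track the image of a distinguished vertex of $T$, and to split into cases according to whether the rightmost vertex of $T$ is a leaf, an internal vertex, or the endpoint of a long monotone suffix, each case being ruled out by the avoidance of $P$, $Q$, or $R$ in the appropriate ordering.
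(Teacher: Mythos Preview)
Both halves of your exact formula argument contain genuine gaps.

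\textbf{Lower bound.} Your graph $G_0=\{ij:\min(i,j)\le k-1\}$ (and its mirror) does \emph{not} avoid every $P,Q,R$-free tree. Take the non-crossing path $T$ on $\{1,2,3,4\}$ with edges $23,13,14$; this is an increasing tree and certainly avoids $P,Q,R$. With $k=3$, your $G_0$ consists of all edges with smaller endpoint in $\{1,2\}$, and the map $1\mapsto1,\;2\mapsto2,\;3\mapsto3,\;4\mapsto4$ embeds $T$ directly. The mirror graph (edges with larger endpoint in $\{n-1,n\}$) also contains $T$ via $1\mapsto1,\;2\mapsto2,\;3\mapsto n-1,\;4\mapsto n$. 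So neither construction witnesses the lower bound for this $T$. The paper's extremal construction is different: for an increasing tree with $k$ edges it takes all edges of \emph{length} less than $k$, and for a general $z$-tree it superimposes a short-edge graph with a left cone and a right cone whose sizes are tuned to the decomposition $E(T)=E(T_0)\cup S_i\cup S_j$.

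\textbf{Upper bound.} The key lemma you propose --- that in a $T$-free ordered graph the rightmost (or, after reversal, leftmost) vertex has degree at most $k-1$ --- is false. With the same $T$ as above ($k=3$), let $G$ be the graph on $[n]$ with edge set $\{1j:j>1\}\cup\{in:i<n\}$. Then $|E(G)|=2n-3=(k-1)n-\binom{k}{2}$, both vertex $1$ and vertex $n$ have degree $n-1$, yet $G$ is $T$-free: any order-preserving image $a<b<c<d$ of $T$ would need the edge $bc$, forcing $b=1$ or $c=n$, each of which is impossible. Since both extreme vertices have degree $n-1>k-1$, your induction on $n$ cannot proceed. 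The paper instead inducts on $k$: it first proves a structure theorem (every $P,Q,R$-free tree with $\chi_i=2$ is a ``$z$-tree'', i.e.\ an increasing tree plus two one-sided stars at the ends of its longest edge), and then, given such a $T$, deletes from each vertex of $G$ one carefully chosen \emph{longest} incident edge, shows the resulting $G'$ avoids $T$ minus a suitable leaf, and applies induction on $k$.

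Your superlinear constructions are in the right spirit; the paper uses exactly the recursive doubling idea for $Q$ and $R$, and a direct ``edges of length a power of $2$'' graph for $P$.
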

As a corollary to Theorem \ref{trees}, if $T$ is any ordered forest containing a path of length four with two or more crossing edges, then $\ex_{\rightarrow}(n,T) = \Omega(n\log n)$.


\subsection{Convex geometric graphs} Problem~\ref{prob1} was posed by Bra{\ss}-K\'arolyi-Valtr~\cite{BKV} in the context of convex geometric graphs, and remains open. Using our methods for ordered graphs and some modifications of constructions due to Tardos, we are able to determine all cg trees with linear extremal function. For convenience, we assume that the vertex set of any cgg we consider lies on a convex set $\Omega$ in the plane.  We say that a cgg $G$ is {\em crossing} or {\em has a crossing} if some pair of its edges intersect geometrically at a point that is not on $\Omega$.

\begin{definition}
	Let ${\cal P}=\{P^0, P^1, P^2\}$ denote the family of three cg forests each comprising two copies $P=abcd, P'=a'b'c'd'$ of a three-edge path  with the following properties:
	
		$\bullet$ the center edges $bc \in P$ and $b'c' \in P'$ do not cross each other \newline
		$\bullet$ the pair of edges $ab$ and $cd$ cross at  $p$  and the pair $a'b'$ and $c'd'$ cross at $p'$ \newline
		$\bullet$ if $\{b,c\}\ne\{b', c'\}$, then
	 $p$ and $p'$ lie outside the region whose boundary contains the segments $bc, b'c'$ and $\Omega$ while if $\{b,c\}=\{b', c'\}$, then $p$ and $p'$ lie on opposite sides of $bc=b'c'$.
	
	 	 We allow $bc$ and $b'c'$ to share $i$ endpoints where $0\le i \le 2$ and we denote the corresponding member of ${\cal P}$ by $P^i$; hence $|V(P^i)| = 8-i$. Note that $P^1$ and $P^2$ are connected while $P^0$ is not.
		\end{definition}

\begin{center}
	\includegraphics[width=6in]{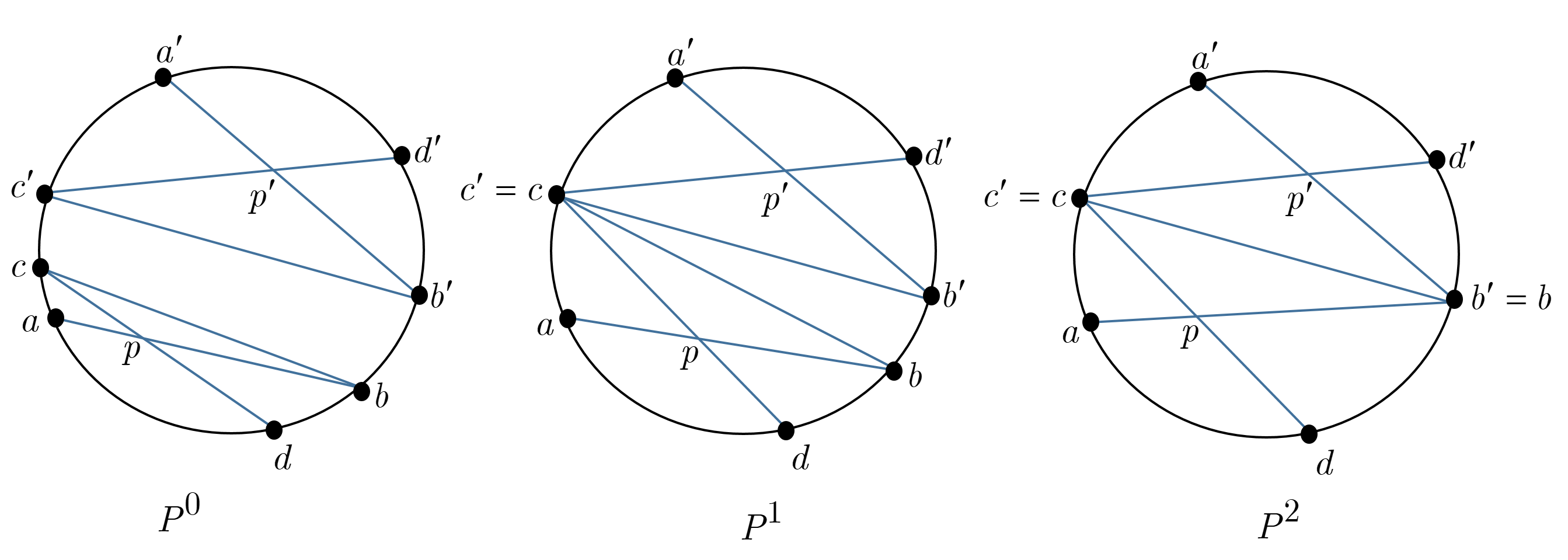}
	
	\vspace{-0.1in}
	
	Figure 2 : The family ${\cal P}$.
\end{center}

Given vertices $a_1, \ldots, a_t$ in a cyclically ordered set $\Omega$ we write $a_1<a_2<\cdots <a_t$ to mean that the vertices are encountered in the order $a_1, a_2, \ldots, a_t, a_1$ when traversing $\Omega$ in the clockwise direction.
Given subsets $A, B$ of $\Omega$, write $A<B$ to denote that there are no elements $a, a' \in A$ and $b, b' \in B$ such that $a<b<a'<b'$. In other words, the intervals $A$ and $B$ appear as disjoint arcs/intervals of $\Omega$. The definition extends naturally to more than two intervals.

 In analogy with the definition of interval chromatic number for ordered graphs,
the {\em cyclic chromatic number}  $\chi_c(G)$ of a cg graph $G$ is the minimum $k$
such that the vertex set of $G$ can be partitioned into (nonoverlapping) intervals $A_1<A_2<\cdots <A_k$ and no edge has both endpoints in any $A_i$.
It is again straightforward to see that if $\chi_c(G)>2$, then  $\ex_{\cir}(n,G)=\Theta(n^2)$.
Consequently, as we are aiming for a characterization of those $G$ for which $\ex_{\cir}(n, G)=O(n)$ we may restrict to $G$ with $\chi_c(G)=2$.
\begin{thm} \label{cgthm}
	Fix $k>2$ and let $T$ be a cg tree with $k$ edges and $\chi_c(T)=2$. Then  either  $\ex_{\cir}(n, T) = \Theta(n)$
	or $\ex_{\cir} = \Omega(n \log\log n)$ where the former holds iff $T$ contains no crossing four-edge path and no member of $\cal P$.
		\end{thm}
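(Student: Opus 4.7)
The plan is to treat the two implications separately. For the upper bound, assume $T$ contains no crossing four-edge path and no member of $\cal P$. Given a cg graph $G$ on $[n]$, cutting its cyclic order at a point between two consecutive vertices produces an ordered graph $\widetilde G$ with the same edge set; since a linear order refines a cyclic order, any ordered-embedding of an ordered tree $T'$ into $\widetilde G$ is simultaneously a cg-embedding of the corresponding cg tree into $G$. In particular, if some cut $c$ of $T$'s cyclic order produces an ordered linearization $T_c$ lying in the family $\cal T$ of Theorem \ref{trees} (that is, $T_c$ avoids each of $P, Q, R$), then cg-$T$-freeness of $G$ yields ordered-$T_c$-freeness of $\widetilde G$, so Theorem \ref{trees} gives $|E(G)| = |E(\widetilde G)| \leq (k-1)n - \binom{k}{2}$.

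The heart of the upper bound is the structural lemma that under the hypothesis some cut $c$ produces $T_c \in \cal T$. Conversely, $T_c$ fails to lie in $\cal T$ precisely when $T_c$ contains a copy of $P$, $Q$, or $R$. The plan for the lemma is a case analysis identifying the cg obstructions in $T$ that block every choice of $c$: a single cg 4-path whose rotations all match $\{P,Q,R\}$---which one verifies to be exactly a crossing cg 4-path, ruled out by hypothesis---and, more subtly, configurations of two cg 3-paths in $T$ whose joint arrangement together with the tree structure forces every cut to produce a $\{P,Q,R\}$-copy in $T_c$. The natural trichotomy on the number of shared middle-edge endpoints of the two 3-paths---zero, one, or two---yields exactly the members $P^0, P^1, P^2$ of $\cal P$, and the geometric conditions in the definition (the "outside" crossings in $P^0, P^1$ and the "opposite sides" requirement in $P^2$) encode precisely the arrangements needed to block all cuts. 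Eliminating both kinds of obstructions therefore guarantees a good cut.

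For the lower bound, suppose $T$ contains either a crossing four-edge path or some $P^i \in \cal P$. For each such forbidden cg substructure $S$, it suffices to construct an $n$-vertex cg graph with $\Omega(n\log\log n)$ edges avoiding $S$, which is then automatically $T$-free. Following the hint about adapting Tardos's constructions, I would take the ordered Davenport-Schinzel-type construction giving $\Omega(n\log\log n)$ edges and avoiding the ordered analogue of $S$, realize it on an arc of the circle, and verify that the few cyclic wrap-around edges introduce no cg copy of $S$. The $\log\log n$ factor reflects the iterated-logarithm behavior typical of Tardos-style constructions, one step weaker than the $n\log n$ ordered bound of Theorem \ref{trees}. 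I expect the main obstacle to be the structural lemma in the upper bound: matching the three geometric configurations $P^0, P^1, P^2$ of $\cal P$ exactly with the cg patterns that obstruct every linearization requires a careful enumeration of how two cg 3-paths embedded in a tree can conspire to force $P, Q, R$ into every $T_c$, and verifying the geometric conditions really do exhaust the bad cases is the crux of the proof.
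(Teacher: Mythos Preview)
Your upper-bound reduction is correct and in fact cleaner than the paper's: the inequality $\ex_{\cir}(n,T)\le\ex_{\to}(n,T_c)$ for any linearization $T_c$ is exactly what the paper uses for the double-star case, and once one knows that $T$ is a cg $z$-tree (the paper's Theorem~\ref{structure}) the definition immediately supplies a linearization $T_c$ that is an ordered $z$-tree, so Lemma~\ref{zlem} gives $(k-1)n-\binom{k}{2}$, sharper than the paper's $2(k-1)n$. Your structural lemma is thus equivalent to Theorem~\ref{structure}; however, your proposed proof of it by ``obstruction enumeration'' is only a heuristic. The paper instead gives a direct constructive argument (heavy-edge case analysis) showing such $T$ is a cg $z$-tree; your contrapositive route, showing that every linearization containing one of $P,Q,R$ forces a crossing $P_4$ or a member of $\cal P$, would require a careful case analysis you have not carried out, and your identification of the single-path obstruction is muddled ($P,Q,R$ are three-edge paths, not four-edge paths, so the relevant statement is about three-edge subpaths of $T$, not about a ``cg 4-path whose rotations all match $\{P,Q,R\}$'').

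The genuine gap is the lower bound. Your plan---take an ordered Davenport--Schinzel-type construction avoiding ``the ordered analogue of $S$'' and place it on an arc---does not work as stated. A cg structure $S$ has several linearizations, and an ordered construction on an arc avoids $S$ as a cgg only if it avoids \emph{all} of them simultaneously; there is no single ``ordered analogue''. More importantly, the paper's $\Omega(n\log\log n)$ for crossing four-edge paths does not come from Davenport--Schinzel at all: it builds a genuinely cyclic graph $F_n$ from $\Theta(\log n)$ matchings and then invokes a nontrivial theorem of Tardos (Theorem~\ref{c3}) on proper edge-colorings to pass to subgraphs $F_{n,1},F_{n,2},F_{n,3}$ with $\Omega(n\log\log n)$ edges and no fast or slow walks; a separate case analysis (Theorem~\ref{cla3}) shows each crossing $P_4$ is excluded from one of these. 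For the $P^i$'s the paper simply cites the $\Theta(n\log n)$ result of Bra\ss--K\'arolyi--Valtr. None of this is captured by ``realize an ordered construction on an arc and check wrap-around edges''.
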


It is interesting to contrast Theorem~\ref{cgthm} with Theorem~\ref{trees}. As a general rule, determining $\ex_{\cir}(n, F)$ seems more difficult than determining $\ex_{\to}(n, F)$. Our experience suggests that both problems exhibit similar but different behavior. For example,  we were not able to determine  $\ex_{\cir}(n, T)$ exactly when it is $\Theta(n)$ like in the ordered case.
 One problematic cg tree is the double star $D$ with $k$ edges and maximum number of crossings. It is easy to observe that $\ex_{\cir}(n, D) \le \ex_{\to}(n, D) = O(n)$ but an exact result for $\ex_{\cir}(n, D)$ seems harder to achieve. Perhaps this is the main impediment to obtaining an exact result in Theorem~\ref{cgthm}. Also, it is not true that all $k$-edge cg trees with linear extremal function have the same extremal function, and we do not know whether every nonlinear extremal function for a cg tree grows at least as $n\log n$.

\section{Ordered trees}
In this section we prove Theorem~\ref{trees}.
	In Section \ref{nonlinear}, we give the constructions which show that each of the ordered paths $P, Q, R$ has extremal function of order at least $n\log n$.
	 In Section \ref{ztrees}, we describe the structure of the ordered trees of interval chromatic number two which do not contain $P,Q$ or $R$. Then in Section \ref{linear}, we determine the extremal function for all those trees.
\subsection{Ordered trees with nonlinear extremal function}\label{nonlinear}

The paths $P, Q$ and $R$ are displayed in Figure 1. In this section, we present for each of $P, Q$ and $R$ a construction of an $n$-vertex ordered graphs with $\Theta(n\log n)$ edges that does not contain $P,Q$ and $R$ respectively. These results are not new, and if fact Tardos~\cite{T} showed that the extremal functions for $P,Q$ and $R$ are all actually of order $n\log n$.

\medskip

{\bf Construction avoiding $P$.} We start with the simple construction that does not contain $P$: form an ordered graph on $[n]$ with edges $ij$ such that $|i - j| = 2^h$ for some $h$. This graph has $\Omega(n\log n)$ edges. It does not contain $P$, since if $V(P) = \{ad,ac,bd\}$ where $a < b < c < d$, then for some $h,i,j$, $2^h = |a - d| < |a - c| + |b - d| = 2^i + 2^j$ whereas $\max\{i,j\} < h$ implies $2^i + 2^j \leq 2^h$, a contradiction. Another was to achieve this construction is to take the graph of the $k$-dimensional cube, where we construct the graph in the usual recursive manner and $n=2^k$.

\medskip

{\bf Construction avoiding $Q$.} Bienstock and Gy\"{o}ri~\cite{BG} gave a construction showing $\ex_{\rightarrow}(n,Q) = \Omega(n\log n/\log\log n)$, and a simple construction giving $\ex_{\rightarrow}(n,Q) = \Omega(n\log n)$ was given by F\"{u}redi and Hajnal~\cite{FH}.
The construction on $2n$ vertices consists of edges between two intervals $I_n$ and $J_n$ of size $n$. For $n = 1$, we take a single edge.
Having the construction at stage $n$, with intervals $I_n$ and $J_n$ of size $n$, take four intervals $I_n,I'_n$ and $J_n,J'_n$ of length $n$,
in that order. We put the preceding construction between $I_n$ and $J_n$ and between $I'_n$ and $J'_n$, and then add a matching $M$
consisting of an edge from the $i$th vertex of $I'$ to the $i$th vertex of $J$ for $i \in [n]$. If $f(n)$ is the number of edges in the old construction, then the new construction has $2f(n) + n$ edges. We conclude $f(2n) = 2f(n) + n$ which implies $f(n) = \frac{1}{2}n\log_2 n + n$. It is shown in~\cite{FH} that this construction does not contain $P$, and so $\ex_{\rightarrow}(2n,Q) \geq \frac{1}{2}n\log_2 n + n$ for all $n \geq 1$.

\medskip

{\bf Construction avoiding $R$.} A similar type of construction avoids $R$. For $n = 1$ we again take a single edge, and having created a construction with two intervals $I_n$ and $J_n$ of length $n$, we take four intervals $I_n,I'_n$ and $J_n,J'_n$ of length $n$ in that order. We put the preceding construction between $I_n$ and $J'_n$ and between $I'_n$ and $J_n$, and then add a matching $M$ consisting of an edge from the $i$th vertex of $I$ to the $i$th vertex of $J$ for $i \in [n]$. Then the number of edges in the construction with $2n$ vertices is $f(n)$ as above,
and the construction does not contain $R$.

\subsection{Structure of trees not containing $P,Q$ or $R$}\label{ztrees}
 In this section, we consider trees which do not contain $P,Q$ or $R$,
and describe their structure. The {\em length} of an edge $ij$ with $i,j \in [n]$ is $|i - j|$. Edges $ij$ and $i'j'$ with $i < j$ and $i' < j'$ {\em cross} if $i < i' < j < j'$ or $i' < i < j' < j$.

\medskip

{\bf Increasing trees.} An {\em increasing tree} is an ordered tree of interval chromatic number two, with parts equal to intervals $I,J \subseteq [n]$,
described as follows. A single edge is an increasing tree. Given an increasing tree, with longest edge $ij$ where $i \in I$
and $j \in J$, we create an increasing tree with one more edge $i'j'$ with $i' \in I$ and $j' \in J$
by requiring $i' = i$ and $j' > j$ or $j' = j$ and $i' < i$. Note that an increasing tree has no crossing edges, and the edges
have a unique ordering by the increasing order of their lengths. Also, increasing trees do not contain $P,Q$ or $R$ since they
have no crossing edges.

\medskip

{\bf $z$-trees.} A {\em $z$-tree} is an ordered tree $Z$ with interval chromatic number two, say with parts equal to intervals $I$ and $J$, consisting
of a union of an increasing tree $T$ with longest edge $ij$ where $i \in I$ and $j \in J$, together with a set $S_j$ of edges of the form $hj$ with $h \in I$ and $h < i$ and a set $S_i$ of edges of the form $ik$ with $k \in J$ and $k > j$. These sets $S_i$ and $S_j$ are allowed to be empty.
Note that any two of the edges $hj$ and $ik$ cross. 

An example of a $z$-tree is below, where the tree increasing tree $T$ is shown in solid edges whereas $S_i$ and $S_j$ are in dashed edges.

\begin{center}
\includegraphics[width=3in]{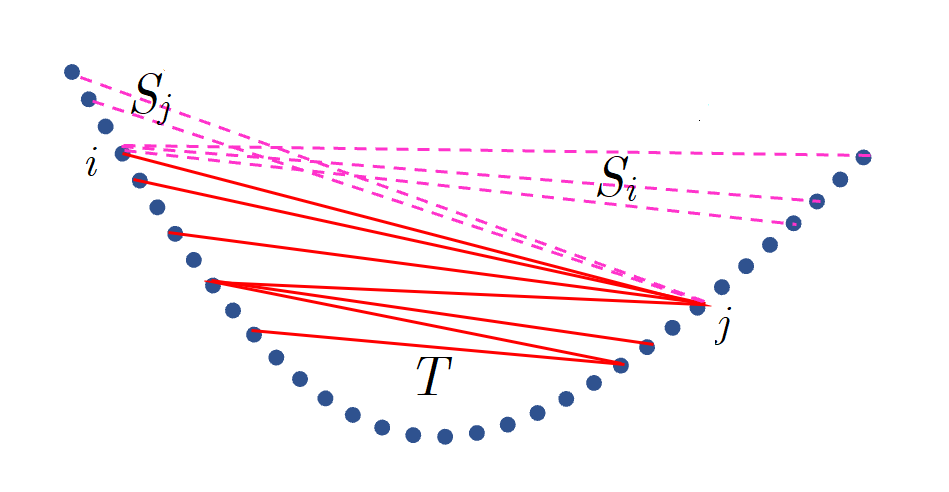}

\vspace{-0.2in}

Figure 3 : A $z$-tree.
\end{center}

Note that the partition $E(T) \cup S_i \cup S_j$ of the edge-set of a $z$-tree and the edge $ij$ is not uniquely determined
by the $z$-tree. To make the partition unique, take a longest path $P^*$ in a $z$-tree $Z$ whose edges are strictly increasing in length, and
let $ij$ be defined to be the second-to-last edge of the path (see Figure 3). Then $S_i$ is the set of edges $ik$ with $k > j$ and $S_j$ is the set of
edges $hj$ with $h < i$. The edge $ij$ and the sets $S_i, S_j$ are uniquely determined by $Z$, as is the increasing tree $T = Z - S_i - S_j$.
By inspection, a $z$-tree does not contain $P,Q$ or $R$, and we now show the converse:

\begin{thm}
If an ordered tree of interval chromatic number two does not contain $P, Q$ or $R$, then it is a $z$-tree.
\end{thm}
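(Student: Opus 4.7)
The plan is to find a canonical central edge $ij$ of $T$ using a maximum-length strictly length-increasing path, and to use the forbidden patterns $P,Q,R$ to force every other edge into one of three allowed regions around $ij$. We argue by induction on $k=|E(T)|$, with $k=1$ trivial. Partition $V(T)=I\cup J$ with $I<J$ realizing $\chi_i(T)=2$. Since two edges sharing a vertex in a bipartite graph necessarily have different lengths, for $k\ge 2$ there is a strictly length-increasing path in $T$ of at least two edges. Take $P^*=e_1e_2\cdots e_m$ to be one of maximum length, set $ij:=e_{m-1}$ with $i\in I,\,j\in J$, and observe that $e_m$ is adjacent to $ij$ and strictly longer; by reflecting the order of $[n]$ if needed, we assume $e_m=ik$ for some $k>j$.

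The key step is to show every edge $uv\in E(T)$ (with $u\in I$, $v\in J$) satisfies at least one of: (a) $u=i$ and $v>j$, (b) $u<i$ and $v=j$, or (c) $u\ge i$ and $v\le j$. Suppose not. Then $uv$ occupies one of three ``rogue'' positions: (P1) $u<i$ and $v>j$; (P2) $u>i$ and $v>j$ with $u\ne i$; or (P3) $u<i$ and $v<j$ with $v\ne j$. For each rogue position we exhibit a copy of one of $P,Q,R$. When $uv$ is adjacent to one of $\{e_{m-2},\,ij,\,e_m\}$, three or four of $\{uv,\,e_{m-2},\,ij,\,e_m\}$ form the forbidden pattern directly (one first verifies that $e_{m-2}$, if present, has the form $hj$ with $i<h<j$). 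When $uv$ is not adjacent to $P^*$, one follows the unique $T$-path $\pi$ from $uv$ to $P^*$: by the maximality of $P^*$, the first edge of $\pi$ entering $P^*$ has controllable length, and combining $uv$ with this edge and one or two edges of $P^*$ again realizes $P$, $Q$, or $R$. The three rogue positions (P1)--(P3) correspond to the three forbidden paths.

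Define $S_i:=\{iv\in E(T):v>j\}$, $S_j:=\{uj\in E(T):u<i\}$, and $T_0:=T\setminus(S_i\cup S_j)$. By the previous claim every edge of $T_0$ satisfies (c), so $ij$ is its longest edge and $T_0\subsetneq T$. By the induction hypothesis $T_0$ is a $z$-tree; noting that $ij$ is its longest edge and that no edge of $T_0$ extends outside $[i,j]$, any nontrivial inductive $S$-sets on $T_0$ would place an edge incident to the inductive central edge $i'j'$ so as to cross $ij$ in a configuration forcing one of $P,Q,R$ to appear inside $T$. Hence these $S$-sets must be empty, so $T_0$ is an increasing tree with longest edge $ij$ and $T=T_0\cup S_i\cup S_j$ is a $z$-tree.

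The combinatorial heart of the argument is the case analysis pairing each rogue position with a copy of $P$, $Q$, or $R$. The main technical obstacle is the subcase in which the rogue edge $uv$ sits far from $P^*$ in $T$; there one must invoke the maximality of $P^*$ carefully to bound the length of the attaching edge along the path $\pi$, ensuring it is short enough to close up a forbidden three- or four-edge pattern. Cleanly matching the three rogue positions with the three forbidden paths --- each representing a distinct crossing configuration for a short ordered bipartite tree --- is what makes the characterization work.
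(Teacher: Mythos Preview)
Your proposal is a plan rather than a proof: the two load-bearing steps are asserted, not carried out. First, the classification ``every edge satisfies (a), (b), or (c)'' is exactly the content of the theorem, and you do not actually perform the promised case analysis. In particular, the subcase where the rogue edge $uv$ is not incident to $P^*$ is only described (``follow $\pi$, invoke maximality of $P^*$''), but maximality of a longest \emph{increasing} path says nothing direct about the length of an arbitrary edge of $\pi$ where it meets $P^*$, and there is no clean one-to-one matching of your rogue positions (P1)--(P3) with $P,Q,R$; a rogue edge can give rise to different forbidden patterns depending on how it attaches. Second, your final step---arguing that the inductive $S$-sets of $T_0$ must be empty so that $T_0$ is increasing---is also just asserted. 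You would need to show, for instance, that $ij$ coincides with the canonical central edge $i''j''$ of $T_0$, which is not automatic from what you have written.

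The paper's proof is organized quite differently and avoids both difficulties. It first records the observation that a tree avoiding $P,Q,R$ contains \emph{no path of length four with a crossing at all}, and proves a small structural claim: any two crossing edges $i'j$, $ij'$ with $i'<i<j<j'$ must be joined by the edge $ij$ (else one finds $P$, $Q$, or $R$). With these in hand, the induction is by \emph{leaf deletion}: remove a leaf $y$ with neighbor $x$, apply induction to $Z' = Z - y$ to get a $z$-tree with its canonical decomposition $E(T)\cup S_i\cup S_j$ and path $P^*$ ending in $ij,\,ik$, and then place the single edge $xy$ by a two-case argument (does $xy$ cross an edge of $P^*-ik$ or not?), using the observation and the claim to rule out bad placements. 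This sidesteps any global ``rogue edge far from $P^*$'' analysis and never needs to argue separately that the non-$S$ part is increasing.
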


\begin{proof}
We begin with the following observation:
\begin{center}
\parbox{5,6in}{\sl If $T$ is an ordered tree with $\chi_i(T)=2$,  not containing $P,Q$ or $R$, then $T$ contains no
path of length four with at least one pair of crossing edges.}
\end{center}
The following claim is helpful:
\begin{center}
\parbox{5.6in}{\sl If $T$ is an ordered tree with $\chi_i(T)=2$ containing crossing edges $i'j$ and $ij'$ where $i' < i < j < j'$ and $ij \not \in E(T)$, then $T$ contains $P$ or $Q$ or $R$.}
\end{center}

We first prove the claim. Since $T$ is a tree, there exists a path in $T$ whose first and last edges are $i'j$ and $ij'$. If this path has length four, then we have a path of length four with a crossing, a contradiction.
Therefore the path must have length three. Since $\chi_i(T)=2$, and $i' < i < j < j'$,
$i'$ and $i$ are not adjacent and $j$ and $j'$ are not adjacent. Therefore the only possibility is that $i'j' \in E(T)$,
but then the edges $i'j',i'j,ij'$ form a copy of $P$ in $T$, a contradiction. This proves the claim.

\medskip

Now we prove the theorem. Let $Z$ be an ordered tree with $\chi_i(Z)=2$  not containing $P,Q$ or $R$, with
intervals $I< J$. Let $xy$ be an edge of $Z$ such that $x \in I$ and $y \in J$ and $y$ has degree 1
in $Z$ (the case $x \in J$ and $y \in I$ is similar). Then $Z'  = Z - \{y\}$ does not contain $P,Q$ or $R$, so $Z'$ is a $z$-tree. We may write $E(Z') = E(T) \cup S_i \cup S_j$, where
$T$ is an increasing tree with longest edge $ij$ with $i \in I$ and $j \in J$, and $S_i = \{ik : k > j\}$ and $S_j = \{hj : h < i\}$. The edge $ij$
is the second-to-last edge of a path longest $P^*$ in $Z'$ whose edge lengths are increasing. Let $ik \in S_i$ be the last edge of the path where
$k \in J$ and $k > j$ (the case that the last edge is $hj$ with $h < i$ is similar).

\medskip

{\bf Case 1.} {\em The edge $xy$ crosses an edge $ab \neq ik$ in $P^*$ where $a \in I$ and $b \in J$.} In this case, either $x < a < b < y$ or $a < x < b < y$. By the claim, if $x < a < b < y$, then $ay \in E(Z)$, contradicting that $y$ has degree 1 in $Z$. So $a < x < b < y$, and the claim gives $xb \in E(Z')$. Now the path $Q^* \subset P^*$ starting with the edges $yx$, $xb$ and $ba$ and ending with the edge $ik$ has length at least four in $Z$ and has a crossing, which is a contradiction. This completes Case 1.

\medskip

{\bf Case 2.} {\em The edge $xy$ crosses no edge of $P^* - ik$.} If $x > i$ then $T \cup \{xy\}$ is an increasing tree and $Z$ is therefore a $z$-tree. We conclude $x \leq i$. If $x < i$, then there is an edge $xj \in S_j$. But then the path $yxjik$ is a path of
length four in $Z$ with a crossing, a contradiction. We conclude $x = i$, and $y > j$, and now $E(Z) = E(T) \cup S_i' \cup S_j$ where $S_i' = S_i \cup \{iy\}$, so $Z$ is a $z$-tree. \end{proof}

\subsection{Ordered trees with linear extremal function}\label{linear}

This section is devoted to determining the extremal function for $z$-trees, thereby completing the proof of Theorem \ref{trees}.
We determine first the extremal function of increasing trees.

\begin{lem}\label{inc}
Let $T$ be an increasing tree with $k$ edges. Then $\ex_{\rightarrow}(n,T) = (k - 1)n - {k \choose 2}$ for $n \geq k + 1$.
\end{lem}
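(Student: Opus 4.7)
The plan is to handle the two bounds separately, with the upper bound via a double induction (on $n$, then on $k$).

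For the lower bound I would take $G^*$ on $[n]$ whose edges are precisely the pairs $ij$ with $1 \le j-i \le k-1$. A direct count yields $|E(G^*)| = \sum_{v=1}^{n}\min(k-1, n-v) = (k-1)n - \binom{k}{2}$. Any increasing tree $T$ with $k$ edges has $k+1$ distinct vertices, so its leftmost and rightmost vertices (joined by $T$'s longest edge) differ by at least $k$ in $[n]$; since every edge of $G^*$ has length at most $k-1$, no such long edge is available, and $G^* \not\supseteq T$.

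For the upper bound I would induct on $n$. The base case $n = k+1$ holds because the threshold $(k-1)(k+1)-\binom{k}{2}$ equals $\binom{k+1}{2}-1$, so a violating graph on $k+1$ vertices must be $K_{k+1}$, which contains every ordered tree on $k+1$ vertices. For the inductive step with $n > k+1$, if some vertex $v$ of $G$ satisfies $d_G(v) \le k-1$, then $|E(G-v)| > (k-1)(n-1) - \binom{k}{2}$, and the inductive hypothesis on $n$ furnishes $T \subseteq G-v \subseteq G$. Hence we may assume $\delta(G) \ge k$, and the task reduces to showing that any such $G$ contains every $k$-edge increasing tree.

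This last statement I would prove by a secondary induction on $k$. Write $T = T' + w$ where $w$ is the vertex added at the final step in the construction of $T$, so $w$ is a leaf of $T$; without loss of generality $w$ is the new leftmost $I$-vertex, whose unique $T$-neighbor is the rightmost $J$-vertex $y^*$ of $T$. Then $T' = T - w$ is an increasing tree with $k-1$ edges, and the inductive hypothesis supplies an embedding $\phi'$ of $T'$ in $G$. It then suffices to find $u \in V(G) \setminus \phi'(V(T'))$ with $u < \min \phi'(V(T'))$ and $u\,\phi'(y^*) \in E(G)$, for then $\phi' \cup \{w \mapsto u\}$ embeds $T$.

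The main obstacle is this extension step, since $\delta(G) \ge k$ alone does not ensure that $\phi'(y^*)$ has a neighbor strictly below all of $\phi'(V(T'))$. My plan is to pick $\phi'$ so as to minimize $\min \phi'(V(T'))$ across all embeddings of $T'$ in $G$, and then run a swapping argument: if no valid $u$ existed, the chain structure of increasing trees (the $T'$-neighbors of $y^*$ form a consecutive block of the leftmost $I$-vertices of $T'$, with consecutive such blocks overlapping) combined with $\delta(G) \ge k$ would permit a local rewiring of $\phi'$ that strictly decreases $\min \phi'(V(T'))$, contradicting the minimality of the chosen embedding.
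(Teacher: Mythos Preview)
Your lower bound is fine and matches the paper's exactly. The outer induction on $n$ reducing to $\delta(G)\ge k$ is also correct. The gap is in the last paragraph, and it is a real one.

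The plan is to take $\phi'$ minimizing $p=\min\phi'(V(T'))=\phi'(x_1)$ and, assuming $q=\phi'(y^*)$ has no $G$-neighbor below $p$, to ``rewire'' so as to \emph{decrease} $p$. But nothing prevents $p=1$. If the minimizing embedding already places $x_1$ at the leftmost vertex of $G$, there is no room below it and no smaller $p$ to aim for; what you actually need in that situation is a \emph{different} embedding with $p>1$ for which $q$ has a neighbor below $p$, and your minimality hypothesis gives no leverage toward that. Even when $p>1$, it is unclear how the ``chain/overlapping blocks'' observation converts $\delta(G)\ge k$ into an embedding with strictly smaller minimum: all neighbors of $q$ are $\ge p$ by assumption, $x_1$ is a leaf whose only constraint is adjacency to $q$, and moving $y^*$ would require simultaneously re-satisfying several adjacencies. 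You have not exhibited the swap, and I do not see a short one; the statement ``$\delta(G)\ge k$ forces every $k$-edge increasing tree'' is strictly stronger than the edge-count bound (for $k\ge 3$ and $n$ large, $kn/2\le (k-1)n-\binom{k}{2}$), so it genuinely needs its own argument.

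By contrast, the paper sidesteps this entirely. It inducts only on $k$, with no min-degree reduction. The trick is: for each vertex $i$ in an appropriate initial segment, delete the longest edge $ij$ with $j>i$; call the result $G'$. One loses at most $n-k$ edges, so by induction $G'$ contains $T'=T-w$. The key structural fact, which you also noticed but did not exploit in this way, is that in an increasing tree the leftmost and rightmost vertices are adjacent. Hence in any embedding of $T'$ in $G'$, the leftmost image $i$ is adjacent to the rightmost image; since that edge survived into $G'$, the edge that was deleted from $i$ goes strictly further right than every embedded vertex, and supplies the image of $w$. This avoids any extremal/swapping argument.
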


\begin{proof}
Observe that the longest edge in $T$ has length at least $k$. Therefore the ordered $n$-vertex graph $G^*$
consisting of all edges $ij$ with $i,j \in [n]$ such that $1 \leq |i - j| < k$ cannot contain $T$, and so
\[ \ex_{\rightarrow}(n,T) \geq e(G^*) = \sum_{i = 1}^{k - 1} (n - i) = (k - 1)n - {k \choose 2}.\]
Now we establish equality. Suppose $G$ is an $n$-vertex ordered graph that does not contain $T$. We prove by induction on $k$ that $e(G) \leq (k - 1)n - {k \choose 2}$ for
$n \geq k + 1$. For $k = 1$, this is clear since any single edge is an increasing tree, so $G$ in that case is empty.
Suppose $T$ is an increasing tree with $k + 1$ edges. Let $uv$ be the longest edge of $T$, where $u < v$, and suppose $v$ is a leaf of $T$. Let $T' = T - uv$.
Assuming $V(G) = [n]$, remove for every $i \leq n - k$ in $V(G)$ the longest edge $ij$ with $j > i$. Then the total number of edges removed from $G$ is at most
$n - k$. We therefore obtain an ordered graph $G'$ with at least $e(G') \geq e(G) - (n - k)$ edges. If $G'$ contains the ordered tree $T'$, say $u$ is mapped to
$i \in [n]$, then $i \leq n - k$, so there exists an edge $ij \in E(G) \backslash E(G')$ such that $j$ is larger than any vertex in the embedding of $T'$ in $G$.
Then adding $ij$ we get an embedding of $T' + uv = T$ in $G$, a contradiction. We conclude $G'$ does not contain $T'$, so by induction $e(G') \leq (k - 1)n - {k \choose 2}$. Therefore
\[ e(G) \leq (k - 1)n - {k \choose 2} + (n - k) = kn - {k + 1 \choose 2}.\]
This completes the proof.
\end{proof}

This proof extends to give the extremal function for $z$-trees in a fairly simple way:

\begin{lem} \label{zlem}
Let $Z$ be a $z$-tree with $k$ edges. Then for $n \geq k + 1$,  $\ex_{\rightarrow}(n,Z) = (k - 1)n - {k\choose 2}$.
\end{lem}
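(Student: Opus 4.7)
The proof is by induction on $k$; the base case $k = 1$ is trivial. The key structural fact driving the inductive step is: every $z$-tree $Z$ with $k \geq 2$ edges has a leaf $v$ which is either the leftmost or the rightmost vertex of $V(Z)$, and $Z' := Z - v$ is again a $z$-tree with $k-1$ edges. This will be verified by cases on the decomposition $Z = T \cup S_i \cup S_j$: if $S_i \neq \emptyset$ (resp.\ $S_j \neq \emptyset$), take $v$ to be the rightmost endpoint of an $S_i$-edge (resp.\ leftmost endpoint of an $S_j$-edge), which is automatically a leaf at the rightmost (resp.\ leftmost) position; if $S_i = S_j = \emptyset$ then $Z = T$ is an increasing tree, and its incremental construction ensures that the last edge added is incident to the current leftmost or rightmost vertex of $T$, yielding a leaf at the corresponding extreme. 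Moreover, in each configuration the unique neighbor $u^*$ of $v$ in $Z$ is adjacent in $Z'$ to the extreme vertex $v'$ of $V(Z')$ on the same side as $v$: for instance, if $v = k^* \in S_i$ with $|S_i| \geq 2$ then $v'$ is the second-largest $S_i$-endpoint and $iv' \in S_i$; if $|S_i| = 1$ then $v' = j$ and $ij \in T$; in the case $Z = T$, $u^*$ and $v'$ are the leftmost and rightmost vertices of the increasing tree $T - v$, joined by its longest edge.

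For the upper bound, assume without loss of generality that $v$ is rightmost in $V(Z)$; the other case is symmetric. Let $r$ be the rank of $u^*$ in $V(Z')$, so in any embedding $\phi$ of $Z'$ into an $n$-vertex ordered graph, $\phi(u^*) \in [r,\, n-k+r]$, a range of $n-k+1$ positions. Given a $Z$-free ordered graph $G$ on $[n]$, for each $i$ in this range delete the longest right-edge at $i$ (if any), forming $G'$ with $e(G) \leq e(G') + (n-k+1)$. Then $G'$ is $Z'$-free: if $\phi$ embedded $Z'$ in $G'$ with $\phi(u^*) = i$, then the edge $u^* v' \in E(Z')$ would force $i\phi(v') = iM \in E(G')$ where $M := \max\phi(V(Z'))$, so the longest right-neighbor $j^*$ of $i$ in $G$ satisfies $j^* \geq M$; equality is impossible because then $iM = ij^* \notin E(G')$ contradicts $iM \in E(G')$, so $j^* > M$, and setting $\phi(v) := j^*$ yields an embedding of $Z$ in $G$, contradicting $Z$-freeness. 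The inductive hypothesis gives $e(G') \leq (k-2)n - \binom{k-1}{2}$, so $e(G) \leq (k-1)n - \binom{k}{2}$ using the identity $\binom{k-1}{2} + (k-1) = \binom{k}{2}$.

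For the lower bound, construct a $Z$-free graph of the claimed size recursively. Let $G_{n-1}$ be a $Z'$-free graph on $[n-1]$ with $(k-2)(n-1) - \binom{k-1}{2}$ edges given by induction, and set $G_n := G_{n-1} \cup \{\{i, n\} : i \in [n-1]\}$; the edge count is then exactly $(k-1)n - \binom{k}{2}$. If $\phi$ embedded $Z$ in $G_n$, then $\phi(v) = \max \phi(V(Z))$; in the case $\phi(v) = n$, restricting to $Z'$ would give $Z' \subseteq G_n \cap [n-1]^2 = G_{n-1}$, contradicting induction, while in the case $\phi(v) < n$ all of $\phi(V(Z))$ lies in $[n-1]$, so $Z \subseteq G_{n-1}$ and hence $Z' \subseteq G_{n-1}$, again a contradiction. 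The symmetric construction (attaching the new vertex at position $1$) handles the case when $v$ is the leftmost vertex of $V(Z)$.

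The main obstacle I foresee is the structural case analysis: verifying in each configuration of the $z$-tree $Z$ that there is a leaf $v$ at an extreme of $V(Z)$, that $Z - v$ remains a $z$-tree, and that $u^*$ is adjacent in $Z'$ to the corresponding extreme of $V(Z')$. This adjacency property plays the role in the upper-bound proof that the ``longest edge spans both extremes'' property plays for increasing trees in the proof of Lemma \ref{inc}.
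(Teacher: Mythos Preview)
Your proof is correct and shares the paper's core idea for the upper bound: from each vertex in an appropriate range delete its longest right-going edge, and show by induction that the resulting graph avoids the smaller $z$-tree $Z'$, using (explicitly in your case, implicitly in the paper's) that the neighbor $u^*$ of the removed leaf is adjacent in $Z'$ to the rightmost vertex of $Z'$. The organization differs: the paper first handles increasing trees separately (Lemma~\ref{inc}) and then inducts only on $|S_i|$, whereas you run a single induction on $k$ by first establishing the structural fact that every $z$-tree has a leaf at one extreme whose removal is again a $z$-tree. Your lower bound is also different: instead of the explicit graph $E_a\cup E_b\cup E_c$, you build an extremal graph recursively by adjoining a universal vertex at the appropriate end to an extremal $Z'$-free graph on $n-1$ vertices. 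Both routes yield the exact bound with comparable effort; your unified induction is a bit cleaner, while the paper's explicit construction makes the extremal example more transparent.
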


\begin{proof}
We may write $Z = T \cup S_i \cup S_j$ where $T$ is an increasing tree with longest edge $ij$ with $i < j$, and $S_i$ consists of edges $ik$
with $k > j$ and $S_j$ consists of edges $hj$ with $h < i$. Suppose $|E(T)| = a$, $|S_j| = b$ and $|S_i| = c$.
We construct an $n$-vertex ordered graph $G^*$ with no copy of $Z$ as follows: the vertex set of $G^*$ is $[n]$, whereas
the edge set consists of $E_a = \{xy : 1 \leq y - x < a\}$, $E_b = \{xy : x \leq b\}$ and $E_c = \{xy : y > n - c\}$.
Let $f(a,b,c) = |E_a \cup E_b \cup E_c|$. A calculation shows $f(a,b,c) = (k - 1)n - {k \choose 2}$. Furthermore,
$E_a \cup E_b \cup E_c$ does not contain a copy of $Z$: since $T$ has $a$ edges, $ij$ has length at least $a$,
so $ij \not \in E_a$. If $ij \in E_b$, then $i \leq b$. However, $Z$ has $b$ vertices preceding $i$, namely the vertices in $S_j$,
so this is not possible. Similarly, if $ij \in E_c$, then $j > n - c$, but since $Z$ has the $c$ vertices in $S_i$ after $j$, this too is impossible. Therefore $G^*$ does not contain $Z$, and we have $\ex_{\rightarrow}(n,Z) \geq (k - 1)n - {k \choose 2}$.

\medskip

We now prove $\ex_{\rightarrow}(n,Z) = f(a,b,c)$ by induction on $|S_i| = c$. If $c = 0$, then Lemma \ref{inc} proves the required equality.
If $c \geq 1$, then we observe $f(a,b,c) - f(a,b,c - 1) = n - k + 1$. Let $G$ be an $n$-vertex ordered graph not containing $Z$.
Following the notation above, with $ij$ the longest edge of $T \subset Z$, for each vertex $g : b < g \leq n - a - c + 1$,
delete the longest edge $gh \in E(G)$ with $h > g$. The number of edges deleted is $n - a - b - c + 1 = n - k + 1$. If this new graph $G'$
contains $Z' = Z - ij'$ where $j'$ is the last vertex of $Z$, then $G$ contains $Z$: we observe $b < i \leq n - a - c + 1$, and so there is a
longest edge $ij' \in E(G) \backslash E(G')$ which can be added to $Z'$ to get $Z$. Therefore $G'$ does not contain $Z'$, and by induction,
$e(G') \leq f(a,b,c-1)$ which implies $e(G) \leq e(G') + n - k + 1 \leq f(a,b,c-1) + n - k + 1 = f(a,b,c)$. This completes the proof.
\end{proof}

\section{Convex geometric trees}
	In this section we prove Theorem~\ref{cgthm}. We denote a crossing four-edge path by the shorter notation crossing $P_4$.	In Section \ref{cghnonlinear}, we give the constructions which show that each $P^i$ and each crossing $P_4$ has extremal function of order at least $n\log n \log n$.
	In Section \ref{cghztrees}, we describe the structure of the cg trees $T$ with $\chi_c(T)=2$ which contain neither a crossing $P_4$ nor any $P^i$. Then in Section~\ref{cghlinear}, we show that these trees have linear extremal function.

\subsection{Convex geometric trees with nonlinear extremal function} \label{cghnonlinear}

We begin by noting that Bra\ss-K\'arolyi-Valtr~\cite{BKV} proved that $\ex_{\cir}(n, P^i) =\Theta(n \log n)$. Actually, they proved this only for $P^0$ but exactly the same proof (both upper and lower bounds)  works for $P^1$ and $P^2$ as well.

In order to present our constructions that avoid crossing $P_4$s, we need a theorem of Tardos~\cite{T2}. The setup of his theorem is as follows. We are given a bipartite graph $G=(A,B,E)$ with a proper edge coloring $c$ with $d$ colors in which the colors
are linearly ordered.

A walk $e_1e_2e_3e_4$ is called {\em fast} if $c(e_2)<c(e_3)<c(e_4)\leq c(e_1)$.
A walk $e_1e_2e_3e_4$ is called {\em slow} if it starts in $B$, $c(e_2)<c(e_3)<c(e_4)$ and $c(e_2)<c(e_1)\leq c(e_4)$.

\begin{thm}[Tardos~\cite{T2} p. 549]\label{c3} Let $G=(A,B,E)$ be
	a bipartite graph with a proper edge coloring with $d$ colors. There exists a subgraph $G'=(A,B,E')$ of $G$ without slow walks and with $|E'|>\frac{\log d}{480d}|E|$.
	Similarly, there exists a subgraph $G''=(A,B,E'')$ of $G$ without fast walks and with $|E''|>\frac{\log d}{480d}|E|$.
\end{thm}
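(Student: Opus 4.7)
My plan is a dyadic-recursion argument in the number of colors $d$. The two parts of the statement (avoiding slow walks, avoiding fast walks) are mirror-image conditions on the color sequence of a $4$-walk, so I would run parallel inductions with the same skeleton. For the base case $d \le 2$ the statement is trivial, since any slow or fast walk requires three distinct colors (the inequality $c(e_2) < c(e_3) < c(e_4)$ is strict), so $G' = G$ suffices and the constant $480$ gives ample slack in the bound $\frac{\log d}{480d}|E|$.

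For the inductive step I would split the palette at the median: let $m = \lfloor d/2\rfloor$, $L = \{1,\ldots,m\}$, $H = \{m+1,\ldots,d\}$, and partition the edges as $E = E_L \sqcup E_H$ according to which side holds $c(e)$. The key structural observation is that any fast (resp.\ slow) walk all of whose four colors lie in $L$ is again fast (resp.\ slow) with respect to the coloring by $L$ alone, since the linear order is inherited from that on $\{1,\ldots,d\}$; likewise for $H$. Applying the inductive hypothesis to each side produces subgraphs $E'_L \subseteq E_L$ and $E'_H \subseteq E_H$, each free of ``same-side'' forbidden walks and each retaining a fraction at least $\frac{\log(d/2)}{480\cdot(d/2)}$ of its parent.

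The only forbidden walks that can survive in $E'_L \cup E'_H$ are those whose four colors \emph{straddle} the split $m$. For each such walk I would mark one distinguished edge and delete it: for fast walks the edge $e_1$ carrying the maximal color is natural, and for slow walks the edge $e_1$ is again distinguished by the sandwich condition $c(e_2) < c(e_1) \le c(e_4)$. A charging argument, using the proper edge coloring to bound how many straddling walks can designate a common edge (at each vertex of a given color there is at most one incident edge), should then show that the total number of deletions is a constant-factor smaller than $|E'_L \cup E'_H|$, preserving the required density after cleanup.

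The main obstacle is the bookkeeping of constants. Each recursion level contributes one extra unit to $\log d$, and this must be paid for by only a constant-factor loss of edges per level; the explicit constant $480$ in the statement signals that the straddling-walk deletion at the final step must discard at most a very specific fraction of the surviving edges. The slow-walk case is trickier than the fast-walk case because $c(e_1)$ is sandwiched rather than extremal, which makes the charging scheme less natural; verifying that $e_1$ remains an admissible choice of distinguished edge at every level, and that the per-level losses aggregate to the factor $480$, is where the bulk of the calculation would live.
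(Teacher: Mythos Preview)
The paper does not prove this theorem at all: it is quoted verbatim from Tardos~\cite{T2} (with the page reference) and used as a black box to produce the subgraphs $F_{n,1},F_{n,2},F_{n,3}$ in the construction of Section~\ref{cghnonlinear}. So there is no ``paper's own proof'' to compare against; any assessment of your outline has to be on its own merits and against what one can reconstruct of Tardos's argument.

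Your dyadic split of the palette and separate induction on $E_L$ and $E_H$ is a reasonable skeleton, and the arithmetic even leaves room: from $|E'_L|+|E'_H|\ge \frac{\log(d/2)}{240d}|E|$ you can afford to discard up to half of the surviving edges in the cleanup step and still meet $\frac{\log d}{480d}|E|$ once $d$ is moderately large. The genuine gap is the cleanup itself. Your plan is to mark the edge $e_1$ of every straddling forbidden walk and delete it, but a single edge $e$ can serve as $e_1$ for a huge number of $4$-walks: once $e_1$ is fixed, each of $e_2,e_3,e_4$ is chosen from among up to $d$ incident edges at the successive endpoints, so there is no a~priori constant bound on how many bad walks charge to $e$. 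Properness of the coloring gives you only one edge of each color at a vertex, not a bound on the number of colors present there. Consequently ``delete one edge per bad walk and charge'' does not, as stated, control the number of deletions; you would need an additional structural step (for instance, a further thinning of one of $E'_L,E'_H$ so that straddling walks are forced to use edges in a controlled pattern, or a random sub-selection of colors with a first-moment deletion) before any such charging can succeed. That missing mechanism is exactly where Tardos's argument does the real work, and your outline does not yet supply it.
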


We are now ready to present our constructions for Theorem~\ref{cgthm}

{\bf Construction.}
Let $v_1,\ldots,v_n$ be in clockwise order on $\Omega$ and form the vertex set $V$ of our construction $F_n$, where $n=2^k$. The edge set of $F_n$ consists of $k-1$
matchings $M_1,\ldots,M_{k-1}$, each of size $n/4$. For each $1\leq j\leq k-1$,
$$M_j=\{v_{2i-1}v_{2i-2+2^{j}}\;:\; i\in [n/4]\}.$$
Let $V_1=\{v_1,v_3,\ldots,v_{n-1}\}$ and $V_2=\{v_2,v_4,\ldots,v_{n}\}$.
For every edge $e=v_iv_j$ in $F_n$, if $j<i$, then $j$ is the {\em left end} if $j<i$ and {\em right end} otherwise. Note that 
\begin{center}
\begin{tabular}{lp{5in}}
(i) & $|E(F_n)|=(k-1)n/4=(\log_2n-1)n/4$;\\
(ii) & the left ends of all edges are in $V_1$ and all right ends are in $V_2$;\\
(iii) & $F_n$ does not contain a path $v_{i_1}v_{i_2}v_{i_3}v_{i_4}$ such that $i_2<i_4<i_1<i_3$. 
\end{tabular}
\end{center}

Case (iii) is referred to by Tardos~\cite{T2} as a {\em heavy path}.
We consider $M_1,\ldots,M_{k-1}$ as color classes of an edge coloring $c$ of $F_n$ in which the colors are ordered according to  their indices.

By Theorem~\ref{c3}, $F_n$ contains subgraphs $F_{n,1}$, $F_{n,2}$ and $F_{n,3}$ such that

\begin{center}
\begin{tabular}{lp{5in}}
(P1) & $|F_{n,j}|\geq  \frac{\log (k-1)}{480(k-1)}|E(F_n)|\geq \frac{\log (k-1)}{1920}n$ for each $1\leq j\leq 3$; \\
(P2) & $F_{n,1}$ does not contain fast walks; \\
(P3) & $F_{n,2}$ does not contain slow walks starting in $V_1$; \\
(P4) & $F_{n,3}$ does not contain slow walks starting in $V_2$.
\end{tabular}
\end{center}

We also will use the cg graph $F_{n,0}$ with the same vertex set $V$ and
$E(F_{n,0})=\{v_iv_j: \,1\leq i\leq n/2, n/2+1\leq j\leq n\}$. \qed
\medskip

\begin{definition}\label{defL}
We denote by $L$ the  cg three-edge path with interval chromatic number greater than two. In other words, $L$ is the $3$-edge cg path
$xyzu$ such that $x<y<z<u$ (in the cyclic ordering).
\end{definition}

{\bf Remark.}
The cg graph $F_{n,0}$ does not contain $L$.

\begin{definition}
A path $P=x_1x_2\ldots x_s$ in a cgg is a {\em zigzag} if it has no crossing and for every $2\leq j\leq s-2$, the sets
$\{x_1,\ldots,x_{j-1}\}$ and $\{x_{j+2},\ldots,x_s\}$ are on different sides of the chord $x_jx_{j+1}$. Alternatively, $P$ has no crossing and $\chi_c(P)=2$.
\end{definition}

Perles (see ~\cite{MP} p. 292) proved that $\ex_{\cir}(n, P) = O(n)$ for any zigzag path $P$.   Our main result is that the construction presented above contains no copy of a crossing $P_4$.

\begin{thm} \label{cla3}
	For every four-edge path $P$ in a cgg apart from the zigzag path, $\ex_{\cir}(n, P) = \Omega(n \log \log n)$. Moreover, $P$ is not contained in one of $F_{n,j}$ for $0\leq j\leq 3$.
\end{thm}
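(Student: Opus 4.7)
To establish $\ex_{\cir}(n, P) = \Omega(n \log \log n)$, it suffices to prove the ``moreover'' claim: for every non-zigzag four-edge cg path $P$, some $F_{n,j}$ with $0 \leq j \leq 3$ avoids $P$. Then the lower bound follows from $|E(F_{n,0})| = \Theta(n^2)$ and Theorem~\ref{c3}, which gives $|E(F_{n,j})| \geq \tfrac{\log(k-1)}{1920}\, n = \Omega(n \log \log n)$ for $j \in \{1,2,3\}$ (using $n = 2^k$, so $\log(k-1) = \Theta(\log \log n)$). I would split the argument by $\chi_c(P)$.

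If $\chi_c(P) \geq 3$, my first step is to show that $P$ contains $L$ as a subpath. Let $P = p_1 p_2 p_3 p_4 p_5$ and suppose toward contradiction that both $3$-edge subpaths $P - p_1$ and $P - p_5$ have $\chi_c = 2$; their interval bipartitions, forced by bipartiteness to be $\{p_1, p_3\} \sqcup \{p_2, p_4\}$ and $\{p_2, p_4\} \sqcup \{p_3, p_5\}$ respectively, make $\{p_2, p_4\}$ a cyclic arc in both the orderings on $V(P)\setminus\{p_1\}$ and $V(P)\setminus\{p_5\}$. A short case check on $P$'s $5$-point cyclic order shows that $p_2$ and $p_4$ are then cyclically adjacent in $P$ itself, and hence $\{p_2, p_4\}\sqcup\{p_1, p_3, p_5\}$ is a valid $2$-arc partition of $V(P)$, giving $\chi_c(P)=2$---a contradiction. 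Thus some $3$-subpath of $P$ has $\chi_c > 2$ and must be a copy of $L$; since the Remark after Definition~\ref{defL} gives $L \not\subseteq F_{n,0}$, also $P \not\subseteq F_{n,0}$.

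Otherwise $\chi_c(P) = 2$ and $P$ has a crossing. Partition $V(P) = A \sqcup B$ into cyclic arcs with $A = \{a_1, a_2, a_3\}$ and $B = \{b_1, b_2\}$ in cyclic order $a_1 a_2 a_3 b_1 b_2$; with the middle vertex of $P$ in $A$ we have $P = a_{\pi(1)} b_{\sigma(1)} a_{\pi(2)} b_{\sigma(2)} a_{\pi(3)}$ for some $\pi \in S_3$, $\sigma \in S_2$. Of the six $(\pi,\sigma)$-classes up to reversal, the unique zigzag is $a_1 b_2 a_2 b_1 a_3$, leaving $5$ non-zigzag shapes. For each such shape my plan is to examine every realization of $P$ inside $F_n$: two bipartition orientations ($A \to V_1$ versus $A \to V_2$) and, within each, the possible rotational (wrap-around) placements on the cycle. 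In each case I either show that the rigid edge-direction constraint of $F_n$ (every edge runs from an odd $V_1$-vertex to a larger-indexed even $V_2$-vertex) rules the configuration out, or I trace the induced matching-color sequence $c(e_1), c(e_2), c(e_3), c(e_4)$. By Theorem~\ref{c3}, once this sequence (in either direction of the walk) is shown to satisfy the fast-walk inequality $c(e_2) < c(e_3) < c(e_4) \leq c(e_1)$, or one of the slow-walk inequalities $c(e_2) < c(e_3) < c(e_4)$ together with $c(e_2) < c(e_1) \leq c(e_4)$ (with the walk starting in $V_1$ or in $V_2$), the corresponding $F_{n,j}$ for $j \in \{1,2,3\}$ contains no copy of $P$.

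The key computational input is that an edge between $v_{2a-1} \in V_1$ and $v_{2b} \in V_2$ in $F_n$ belongs to $M_k$ iff $b - a + 1 = 2^{k-1}$, so two consecutive edges sharing an endpoint translate index displacements of the non-shared ends into power-of-two color differences. The cyclic positioning $a_1 < a_2 < a_3 < b_1 < b_2$ (or its rotations) then forces monotonicities $c(e_i) \lessgtr c(e_j)$ and algebraic inequalities of the form $2^{c(e_i)-1} + 2^{c(e_\ell)-1}$ versus $2^{c(e_j)-1} + 2^{c(e_m)-1}$. For certain shapes (for instance the $3$-crossing path $a_3 b_2 a_2 b_1 a_1$) these inequalities are infeasible for every orientation and placement, so $F_n$ itself already avoids $P$; for the remaining shapes the forced inequalities pin the walk's color signature to exactly one of fast, slow-in-$V_1$, or slow-in-$V_2$. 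Property (iii) of $F_n$ (no heavy $3$-path) is invoked in several sub-cases to eliminate mixed placements. The main obstacle is executing this bookkeeping uniformly over all $5$ non-zigzag shapes, both orientations, and every wrap-around case, and matching each viable configuration to the correct one of $F_{n,1}, F_{n,2}, F_{n,3}$.
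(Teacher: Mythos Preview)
Your overall framework is right and matches the paper's: handle $\chi_c(P)\geq 3$ with $F_{n,0}$, and for $\chi_c(P)=2$ non-zigzag paths use the matching-color structure of $F_n$ together with the fast/slow-walk properties of $F_{n,1},F_{n,2},F_{n,3}$. Two points where the paper is cleaner:

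\textbf{The $\chi_c\geq 3$ step.} Your detour through $L$ is correct (your ``short case check'' really does work: if $\{p_2,p_4\}$ is an arc in both $V(P)\setminus\{p_1\}$ and $V(P)\setminus\{p_5\}$, then $p_3$ forces it to be an arc in $V(P)$), but it is unnecessary. The paper simply observes that $F_{n,0}$ is complete bipartite between two arcs, hence $\chi_c(F_{n,0})=2$, so \emph{any} cg graph with $\chi_c>2$ is automatically absent from $F_{n,0}$---no need to locate a copy of $L$.

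\textbf{The $\chi_c=2$ step.} Here your proposal is a plan rather than a proof: you set up the enumeration (five non-zigzag shapes, two orientations, possible rotational placements) and correctly name the tools, but you do not carry out the case analysis, and you flag the bookkeeping as the ``main obstacle.'' The paper avoids this brute-force enumeration by arguing \emph{inside} $F_{n,1}$ first. Fixing the middle vertex $c$ and using only (a) the parity/direction rule that every edge of $F_n$ runs from an odd index to a larger even index, (b) $\chi_c(P)=2$, and (c) the power-of-two edge-length constraint, the paper shows that any non-zigzag $\chi_c=2$ path embedded in $F_{n,1}$ would force a fast walk---except for exactly two cyclic types, $(1,5,3,4,2)$ and its mirror $(5,1,3,2,4)$. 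These two survivors are then dispatched individually by $F_{n,2}$ and $F_{n,3}$ via the slow-walk condition. So instead of five separate shape analyses, the paper does one uniform argument that simultaneously eliminates all but two types. Your ``wrap-around placements'' are also a non-issue: every edge of $F_n$ has its odd endpoint at a strictly smaller index than its even endpoint, so the linear picture suffices throughout.

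In short: your plan would work if fully executed, but the paper's organization---filter everything through $F_{n,1}$ first, then mop up two leftovers---is what turns the argument into a short proof rather than a long case check.
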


\begin{proof} Since each $F_{n,j}$ has at least $\Omega(n \log k) = \Omega(n \log\log n)$ edges, it suffices to show that for each crossing four-edge path $P$ there is a $j$ for which $P \not\subset F_{n,j}$. Every type of a cgg four-edge path corresponds to a cyclic permutation of $[5]$. So we need to consider $4!=24$ types of them.
By the remark above, it is enough to consider the types with  cyclic chromatic number $2$. Suppose such a path
(i.e. with cyclic chromatic number $2$ and distinct from a zigzag path)
$P=abcdf$ can be
embedded into $F_{n,1}$. Suppose that for $x\in\{a,b,c,d,f\}$, $x$ is mapped onto $v_{i_x}$.

Assume $i_c$ is odd (the proof for even $i_c$ will be symmetric). By the structure of $F_n$, $i_b>i_c$ and $i_d>i_c$.
So by symmetry we may suppose
\begin{equation}\label{e1}
i_c<i_d<i_b.
\end{equation}
Again by the structure of $F_n$, $i_b>i_a$. If $i_d<i_a<i_b$, then the cyclic chromatic number of the path $a,b,c,d$ is $3$, a contradiction. The situation $i_c<i_a<i_d$ impossible because the lengths of the edges are of the form $2^j-1$. So, the only possibility
is $i_a<i_c<i_d<i_b$.

Once again by the structure of $F_n$, $i_d>i_f$. If $i_c<i_f<i_d$, then $P$ is a zigzag path, contradicting our choice.
If $i_f<i_a$, then $i_d-i_f\geq i_b-i_a$, since otherwise $2(i_d-i_f)< i_b-i_a$ and $2(i_b-i_c)< i_b-i_a$, a contradiction.
But in this case, $F_{n,1}$ contains the fast walk $fdcba$ contradicting (P2). Thus, $i_a<i_f<i_c$. This means we need to consider
only cyclical structure $(1,5,3,4,2)$ and (when we switch from the case of odd $i_c$ to the even) $(5,1,3,2,4)$.

\medskip
{\bf Case 1:} $(1,5,3,4,2)$. We claim that $F_{n,2}$ does not contain it. Indeed, suppose it does. If $i_c$ is odd, then repeating the
above argument we come to  $i_a<i_f<i_c$. But this means $F_{n,2}$ contains the slow walk $fdcba$ contradicting (P3).
Thus assume $i_c$ is even. Then by the structure of $F_n$, $i_d<i_c$ and $i_b<i_c$, say  $i_d<i_b<i_c$. Then by the cyclic
structure of our path, $i_b<i_f<i_c$. This is impossible, since $2(i_f-i_d)< i_c-i_d$ and $2(i_c-i_b)< i_c-i_d$.

\medskip
{\bf Case 2:} $(5,1,3,2,4)$. We claim that $F_{n,3}$ does not contain this path. The proof is symmetric to Case 1.\end{proof}

\subsection{Structure of trees avoiding $\cal P$ and a crossing $P_4$} \label{cghztrees}

We need the definitions of increasing trees and $z$-trees in the cg setting.

{\bf Convex geometric increasing trees.} A {\em cg increasing tree} is a cg tree of cyclic chromatic number two obtained as follows. Start with an (ordered) increasing tree with vertex set $[n]$ and
view the linear ordering of the vertices as a cyclic ordering  $n<n-1<\cdots <2<1$.
 Note that a cg increasing tree has no crossing edges, and the edges
have a unique ordering by the increasing order of their lengths (when viewed in terms of the natural linear order on $[n]$).

\medskip

{\bf Convex geometric $z$-trees.} A {\em cg $z$-tree} is a cg tree $Z$ with cyclic chromatic number two, obtained from a $z$-tree with ordered vertex set $[n]$ and intervals $I, J \subset [n]$ by viewing the linear ordering of the vertices as a cyclic ordering $n<n-1<\cdots <2<1$.
Note that $Z$ is a union of a cg increasing tree $T$ with longest edge $ij$ where $i \in I$ and $j \in J$, together with a set $S_j$ of edges of the form $hj$ with $h \in I$ and $i<h$ and a set $S_i$ of edges of the form $ik$ with $k \in J$ and $k < j$ (See Figure 4 viewed as a cyclic ordering). These sets $S_i$ and $S_j$ are allowed to be empty.
Note that any two of the edges $hj$ and $ik$ cross (see Figure 3 for an example).


%
%

Our main structural result is the following.

\begin{thm} \label{structure}
	Let $T$ be a cg tree with $\chi_c(T)=2$ that contains no crossing $P_4$ and no member of ${\cal P}$. Then $T$ is a cg $z$-tree.
	\end{thm}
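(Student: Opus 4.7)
The plan is to follow the ordered structure theorem in Section 2.2 almost verbatim, with ``crossing $P_4$'' and ``member of $\mathcal{P}$'' replacing the ordered forbidden paths $P, Q, R$. Since $\chi_c(T) = 2$, partition $V(T) = I \cup J$ into two arcs; every edge of $T$ has one endpoint in each arc, so cg crossings of such edges coincide with the ordered crossings obtained by cutting the cycle between $I$ and $J$. Proceed by induction on $|E(T)|$, with a single edge as the base case. For the inductive step, pick a leaf $y$ of $T$, say $y \in J$ with neighbor $x \in I$, and apply induction to $T' = T - y$ to write $T'$ as a cg $z$-tree: a cg increasing tree $T_0$ with longest edge $uv$ ($u \in I$, $v \in J$), augmented by a set $S_v$ of edges $hv$ ($h \in I$, $u < h$) and a set $S_u$ of edges $uk$ ($k \in J$, $k < v$). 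Let $P^*$ be a longest strictly-increasing-length path in $T'$ whose second-to-last edge is $uv$ and whose last edge is (WLOG) some $uk \in S_u$.

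The key tool is a cg analog of the ordered Claim: if $T$ contains crossing edges $i'j$ and $ij'$ with $i', i \in I$, $j, j' \in J$, $i' < i < j < j'$ in the linearized order, and $ij \notin E(T)$, then $i'j' \in E(T)$ and the edges $i'j, i'j', ij'$ form a 3-edge path in $T$. The proof is verbatim from the ordered case: the tree-path from $i'j$ to $ij'$ has length exactly $3$ (length $\geq 4$ contains a crossing $P_4$, forbidden; length $\leq 2$ is impossible since the edges are vertex-disjoint and $ij \notin E(T)$), and $\chi_c = 2$ forces the middle edge to be $i'j'$. With this in hand, the case analysis proceeds as in the ordered proof. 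If $xy$ crosses some $ab \in P^* \setminus \{uk\}$, apply the Claim to the crossing pair $(xy, ab)$ and extend the resulting $3$-edge subpath along $P^*$ (available because $ab$ is not the last edge of $P^*$) to obtain a $4$-edge path in $T$ that still carries the crossing, i.e., a crossing $P_4$---contradiction. Otherwise $xy$ crosses nothing in $P^* \setminus \{uk\}$, and the geometric argument of the ordered proof transfers directly to show that either $x = u$ and $xy$ joins $S_u$, or $xy$ extends $T_0$ to a longer cg increasing tree; in both cases $T$ is a cg $z$-tree.

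The main obstacle is that in the ordered setting the Claim immediately produces a forbidden $P$, whereas in cg a single $3$-edge path with crossing ends is \emph{not} forbidden on its own---every nontrivial cg $z$-tree contains such a path, namely $h$-$v$-$u$-$k$ with $hv \in S_v$, $uk \in S_u$. Thus one cannot derive a contradiction directly from the Claim; rather, one must always extend the new $3$-edge path by a backbone edge along $P^*$ to produce a genuine crossing $P_4$. In the subcases where this extension is obstructed at an end of $P^*$, the fallback is to pair the new crossing $3$-path with one already present in $T'$'s $z$-tree structure (the $3$-paths $h$-$v$-$u$-$k$ with middle edge $uv$) and verify that together they form some $P^i \in \mathcal{P}$; checking the geometric conditions on $P^i$---non-crossing middle edges and crossing points on the prescribed sides of the region bounded by the two middle edges and $\Omega$---is the most delicate technical step, and is precisely where the hypothesis ``no member of $\mathcal{P}$'' gets used.
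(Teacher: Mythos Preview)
Your inductive approach—linearizing via $\chi_c(T)=2$ and mimicking the ordered structure theorem—is genuinely different from what the paper does. The paper gives a \emph{direct, non-inductive} structural argument: it calls an edge $e$ \emph{heavy} if both endpoints have a neighbor on the same side of $e$, and splits into two cases. If a heavy edge $i'j$ exists, the forbidden $3$-path $L$ (the unique $P_3$ with $\chi_c>2$) forces the neighbors on one side into a crossed double star, and ``no crossing $P_4$'' makes all those neighbors leaves; on the opposite arc one then argues level by level, invoking $P^2$, $P^1$, $P^0$, $L$, and crossing $P_4$'s in turn to show the remainder is a cg increasing tree rooted at $i'j$. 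If no heavy edge exists, the same level-by-level argument shows $T$ is itself a cg increasing tree. Each forbidden configuration $P^i$ is used at one clearly identified step, so its role is transparent.

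Your proposal, by contrast, leaves the crucial step unexecuted. You correctly observe that the cg analogue of the ordered Claim only produces a crossing $3$-path, which is \emph{not} forbidden, and that one must either extend it along $P^*$ to a crossing $P_4$ or pair it with an existing crossing $3$-path of $T'$ to form some $P^i$. But you do not actually carry out either of these: when the extension along $P^*$ is blocked (e.g.\ $ab$ is an end-edge of $P^*$ on the relevant side), the fallback requires that $T'$ already contains a crossing $3$-path—this fails when one of $S_u,S_v$ is empty—and even when such a path exists you must verify the side conditions on the crossing points in the definition of $P^0,P^1,P^2$, which you defer entirely. Similarly, your cg Claim asserts that a tree-path of length $\ge 4$ between two crossing edges ``contains a crossing $P_4$''; for length exactly $4$ this is immediate, but for length $\ge 5$ only the first and last edges are known to cross, and it is not automatic that some four \emph{consecutive} edges do. These are real gaps, not just omitted routine checks: filling them would require a case analysis comparable in length to the paper's direct proof, and the paper's heavy-edge decomposition handles them more cleanly.
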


\begin{proof}
	Suppose we have an embedding of $T$ in a circle $\Omega$. We will simultaneously refer to $T$ as well as to the geometric properties of its embedding.  Say that an edge $e$ is {\em heavy} if both its endpoints have a neighbor on the same side of $e$. Since we have no $L$ (see Definition~\ref{defL}), this  means that every heavy edge $e$ gives rise to a crossing three-edge path with central edge $e$.
	
	{\bf Case 1.} There is a heavy edge $e=i'j$.	
	Suppose $j'$ is a neighbor of $i'$ and $i$ is a neighbor of $j$ such that both $i$ and $j'$ are on the same side of $e$, assume by symmetry that $j'$ and $i$ lie on the   arc $(j,i')$ of $\Omega$ taken clockwise. Then  $i<i'<j<j'$ otherwise we obtain $L$.
	This shows that all such neighbors of $j$ lie clockwise of all such neighbors of $i'$ in $(j, i')$ and we obtain a double star as shown below. Moreover, each of these neighbors $j', i$ has degree one otherwise we obtain a crossing $P_4$.
		\begin{center}
		\includegraphics[width=1.6in]{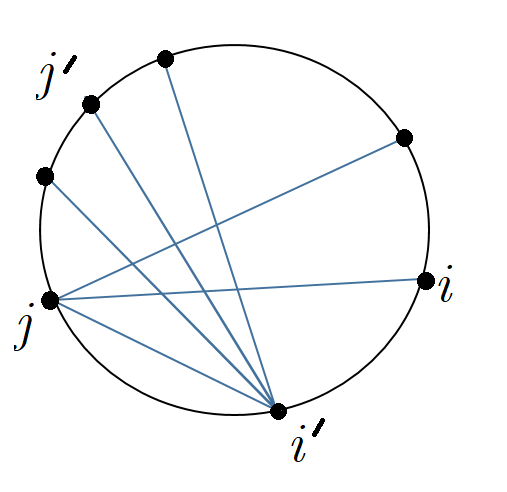}\\
Figure 4. 
			\end{center}
	Hence, to grow $T$ further, we must consider neighbors of $i'$ or $j$ on the   arc $(i', j)$ of $\Omega$ which omits $j'$ and $i$ (see Figure 4).
		We now claim that on the   arc $(i',j)$, the tree $T$ is an increasing tree and moreover, there is no edge that crosses $e=i'j$. This will complete the proof in this case.
		First observe that $i'$ and $j$ cannot both have neighbors in $(i', j)$ otherwise we get $L$ as before or $P^2$ as shown below.
			\begin{center}
			\includegraphics[width=1.9in]{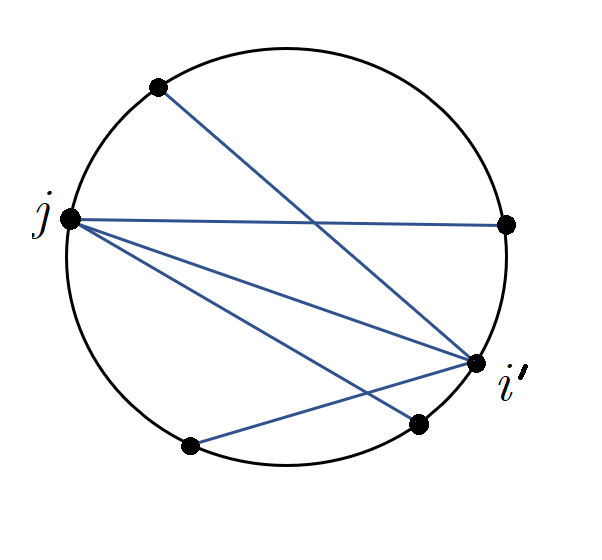} \\
Figure 5.
		\end{center}
	So we may assume by symmetry that only $j$ has neighbors in  $(i',j)$.	
	 If $j_1,j_2,\dots,j_r$ are the neighbors of $j$ in $(i',j)$ in
	increasing clockwise order, then only $j_r$ can have degree at least two, otherwise we get a copy of $L$ or $P^1$; furthermore,  all neighbors of $j_r$ are in the segment $(j_r,j)$; otherwise we have $L$ or a crossing $P_4$.
	We now continue in this way: if $j_r$ has neighbors $k_1,k_2,\dots,k_m,j$ in increasing clockwise order in $(j_r,j)$,
	then only $k_1$ can have degree at least two, otherwise we have  a copy of $L$, or $P^0$, or a crossing $P_4$. Furthermore, all neighbors of $k_1$ are in the segment $(j_r,k_1)$ otherwise we obtain  $L$ or a crossing $P_4$. We continue this process till we exhaust all of $T$.
	
	{\bf Case 2.} There is no heavy edge.
	In this case we claim the stronger statement that $T$ is cg increasing tree. Start by choosing any edge $e=i'j$ and consider the neighbors of $i'$ or of $j$ in the   arc $(i', j)$ (traversed clockwise as usual). Since $e$ is not heavy, at most one of $i',j$ has neighbors in   $(i',j)$, say $j$. Then, using the fact that there is no heavy edge, we proceed as in the previous paragraph until we have exhausted all vertices of $T$ in $(i', j)$. Then we repeat this argument in the   arc $(j, i')$ to show that $T$ is a cg increasing tree.
		\end{proof}

\subsection{Convex geometric trees with linear extremal function} \label{cghlinear}

We are now in a position to complete the proof of Theorem~\ref{cgthm}

{\bf Proof of Theorem~\ref{cgthm}.} By Theorems~\ref{cla3} and~\ref{structure}, it suffices to show that every cg $z$-tree $Z$ with $k\ge 2$ edges satisfies $\ex_{\cir}(n, Z) \le 2(k-1)n$.
We will prove this by induction on $k$, with the case $k=2$ being trivial. For the induction step, suppose that $Z$ is a cg $z$-tree with $k>2$ edges and $G$ is an $n$-vertex cg graph with more than $2(k-1)n$ edges.

 Let us first do the case when $Z$ is a double star with edge set
$S_i \cup S_j$. In this case, we view $\Omega$ as a linearly ordered set, and use the fact that
$\ex_{\cir}(n, Z) \le \ex_{\to}(n, Z)\le (k-1)n$ where  the bound $\ex_{\to}(n, Z)\le (k-1)n$ follows from Lemma~\ref{zlem}, with $Z$ viewed as the appropriate ordered double star.

We now assume that $Z$ has a leaf $x$ incident  to some shortest  edge outside $S_i \cup S_j$ and let $Z'=Z-v$ (in Figure 3, this corresponds to deleting the lowest solid edge). Let $G'$ be the cg graph obtained from $G$ by deleting, for each vertex $v \in V(G)$, the two shortest edges incident to $v$, one in each direction. We delete at most $2n$ edges, so $G'$ has more than $2(k-2)n$ edges and hence by induction $G'$  contains a copy of $Z'$. We then extend this copy of $Z'$ to a copy of $Z$ in $G$ using one of the edges that was deleted in forming $G'$.
\qed

\paragraph{Acknowledgement.}
This research was partly conducted during AIM SQuaRes (Structured Quartet Research Ensembles) workshops, and we gratefully acknowledge the support of AIM.

{\small

\begin{tabular}{ll}
\begin{tabular}{l}
{\sc Zolt\'an F\" uredi} \\
Alfr\' ed R\' enyi Institute of Mathematics \\
Hungarian Academy of Sciences \\
Re\'{a}ltanoda utca 13-15. \\
H-1053, Budapest, Hungary. \\
E-mail:  \texttt{zfuredi@gmail.com}.
\end{tabular}
& 
\begin{tabular}{l}
{\sc Alexandr Kostochka} \\
University of Illinois at Urbana--Champaign \\
Urbana, IL 61801 \\
and Sobolev Institute of Mathematics \\
Novosibirsk 630090, Russia. \\
E-mail: \texttt {kostochk@math.uiuc.edu}.
\end{tabular} \\ \\

 \begin{tabular}{l}
{\sc Dhruv Mubayi} \\
Department of Mathematics, Statistics \\
and Computer Science \\
University of Illinois at Chicago \\
Chicago, IL 60607. \\
\texttt{E-mail: mubayi@uic.edu}.
\end{tabular} 
&\begin{tabular}{l}
{\sc Jacques Verstra\"ete} \\
Department of Mathematics \\
University of California at San Diego \\
9500 Gilman Drive, La Jolla, California 92093-0112, USA. \\
E-mail: {\tt jverstra@math.ucsd.edu.}
\end{tabular}
\end{tabular}
}

\end{document}